\theoremstyle{definition}
\newtheorem{question}{Question}[section]
\newtheorem{definition}[question]{Definition}
\newtheorem{remark}[question]{Remark}
\newtheorem*{remark*}{Remarque}
\newtheorem{example}[question]{Example}
\theoremstyle{plain}
\newtheorem{theorem}[question]{Theorem}
\newtheorem*{theorem*}{Theorem}
\newtheorem*{reponse*}{Answer}
\newtheorem{lemma}[question]{Lemma}
\newtheorem*{lemma*}{Lemma}
\newtheorem*{proposition*}{Proposition}
\title{\Large Injectivity radius of manifolds with a Lie structure at infinity}
\date{}
\author{Quang-Tu Bui}
\begin{document}
\maketitle
\titleformat*{\section}{\normalsize\bfseries\filcenter}

\begin{abstract}
Using Lie groupoids, we prove that the injectivity radius of a manifold with a Lie structure at infinity is positive. 
\end{abstract}

\section*{Introduction}
Manifolds with a Lie structure at infinity were introduced by Ammann, Lauter and Nistor in \cite{aln}, forming a class of non-compact complete Riemannian manifolds of infinite volume. In the same article, they conjectured that the injectivity radius of a (connected) manifold with Lie structure at infinity is positive. In this paper, we give a proof of this conjecture using the associated groupoid given by \cite{cf} and \cite{de}. Together with the results from \cite{aln}, this implies that manifolds with a Lie structure at infinity are of bounded geometry. In particular, the hypothesis of injectivity radius in \cite{aln2} is now automatically satisfied, as well as in \cite{ma}, where positivity of the injectivity radius is used to obtain uniform parabolic Schauder estimates. Bounded geometry also yields uniform elliptic Schauder estimates, see \cite{con} for a recent application in this direction.\par
\textbf{Acknowledgement.} \textit{The author thanks his PhD advisor Frédéric Rochon for suggesting the problem and the approach, and also wishes to thank Bernd Ammann, Claire Debord and Victor Nistor for helpful discussions.}
\section{Preliminaries}
Following \cite{aln} and \cite{nwx}, we recall some definitions and facts. 
\begin{definition}
A \textit{groupoid} is a small category $G$ in which every morphism is invertible.
\end{definition}
The objects of the category are also called \textit{units}, and the set of units is denoted by $G^0$. The set of morphisms is denoted by $G^1$. The range and domain maps are denoted respectively $r,d:G^1\to G^0$. The multiplication operator $\mu$ is defined on the set of composable pairs of morphisms by:
$$\mu:G^2=G^1\times_{G^0}G^1=\{(g,h):d(g)=r(h)\}\to G^1$$
The inversion operation is a bijection $\iota:g\mapsto g^{-1}$ of $G^1$. The identity morphisms give an inclusion $u:x\mapsto\mathrm{id}_x$ of $G^0$ into $G^1$.\par
\begin{definition}
An \textit{almost differentiable groupoid} $G=(G^0,G^1,d,r,\mu,u,\iota)$ is a groupoid such that $G^0$ and $G^1$ are manifolds with corners (\cite{me}), the structural
maps $d, r,\mu, u, \iota$ are differentiable, and the domain map $d$ is a submersion.
\end{definition}
Consequently, for an almost differentiable groupoid, $\iota$ is a diffeomorphism, $r=d\circ\iota$ is a submersion and each fiber $G_x=d^{-1}(x)\subset G^1$ is a smooth manifold whose dimension $n$ is constant on each connected component of $G^0$.\par
Following the convention in \cite[p. 578]{cf}, we require $G^0$ and $d^{-1}(x)$ to be Hausdorff (for all $x\in G^0$), but not necessarily $G^1$ to avoid excluding important cases.\par
From now on, \textit{Lie groupoid} will stand for almost differentiable groupoid, \textit{manifold} will stand for \textit{manifold with corners} and \textit{smooth manifold} will stand for manifold wihout corners.\par
A Lie groupoid is called $d$-\textit{simply connected} if its $d$-fibers $G_x=d^{-1}(x)$ are simply connected (\cite{cf}).\par
\begin{definition}
A \textit{Lie algebroid} $A$ over a manifold $M$ is a vector bundle $A$ over $M$, together with a Lie algebra structure on the space $\Gamma(A)$ of smooth sections of $A$ and a bundle map $\rho : A \to TM$, extended to a map $\rho_{\Gamma}: \Gamma(A) \to\Gamma(TM)$ between sections of these bundles, such that
\begin{enumerate}
\item $\rho_\Gamma([X,Y])=[\rho_\Gamma(X),\rho_\Gamma(Y)]$
\item $[X, fY ] = f[X,Y] + (\rho_\Gamma(X)f)Y$ 
\end{enumerate}
for any smooth sections $X$ and $Y$ of $A$ and any smooth function $f$ on $M$.
\end{definition}
There is a Lie algebroid $A(G)$ associated to a Lie groupoid $G$, constructed as follows: let $T_{\mathrm{vert}}G=\mathrm{ker}d_*=\cup_{x\in G^1}TG_x\subset TG^1$ be the vertical bundle over $G^1$. Then $A(G)=T_{\mathrm{vert}}G|_{G^0}$ is the structural bundle of the Lie algebroid over $G^0$. The anchor map is given by
$$r_*|_A:A\to TG^0$$
(\cite{aln2}). The Lie bracket of $\Gamma(A)$ is the Lie bracket of $\Gamma(T_{\mathrm{vert}}G)$ restricted to right invariant sections.\par
\begin{definition}
A Lie algebroid $A$ over a manifold $M$ is said to be \textit{integrable} if there exists a Lie groupoid $G$ such that $G^0=M$ and $A$ is isomorphic to the Lie algebroid associated to $G$. $G$ is said to \textit{integrate} $A$.
\end{definition}
\begin{remark}
\label{cf}
There might be more than one Lie groupoid integrating a Lie algebroid. However, by \cite[Lie I]{cf}, if a Lie algebroid over a smooth manifold is integrable, there is a unique $d$-simply connected Lie groupoid integrating it.
\end{remark}
\begin{example}
\begin{enumerate}
\item Any Lie group is a Lie groupoid with the set of units being a singleton.
\item (\cite[Example 4, Section 4]{nwx}) Let $M$ be a smooth manifold. Let $\widetilde{M}$ be the universal covering of $M$. Let $H=(\widetilde{M}\times\widetilde{M})/{\pi_1(M)}$. Then $H$ is naturally a $d$-simply connected Lie groupoid with the set of units being $M$, and the associated Lie algebroid being $\mathrm{id}:TM\to TM$. It is called the homotopy groupoid.
\item The space of continuous paths on a topological space modulo homotopy equivalence forms a groupoid which is called the fundamental groupoid.
\end{enumerate}
\end{example}
We recall the definitions and basic properties of manifolds with Lie structures at infinity. For details and proofs, we refer to \cite{aln}.\par
\begin{definition} A \textit{structural Lie algebra} of vector fields on a manifold $M$ (possibly with corners) is a subspace $\mathcal{V}\subset\Gamma(TM)$ of the real vector space of vector fields on $M$ with the following properties:
\begin{enumerate}
\item $\mathcal{V}$ is closed under Lie brackets;
\item $\mathcal{V}$ is a finitely generated projective $\Gamma(M)$-module;
\item The vector fields in $\mathcal{V}$ are tangent to all faces in $M$.
\end{enumerate}
\end{definition}
Denote by $\mathcal{V}_b(M)\subset\Gamma(TM)$ the subspace of vector fields tangent to all faces in $M$. This is a structural Lie algebra of vector fields, and any structural Lie algebra is a subspace of $\mathcal{V}_b(M)$ (\cite[Example 2.5]{aln}).\par
By the Serre-Swan theorem, given a structural Lie algebra of vector fields $\mathcal{V}$ on $M$, there exists a vector bundle $A=A_\mathcal{V}\to M$ such that $\mathcal{V}\simeq \Gamma(A_\mathcal{V})$, and there exists a natural vector bundle map $\rho:A_\mathcal{V}\to TM$ such that the induced map $\rho_\Gamma:\Gamma(A_\mathcal{V})\to\Gamma(TM)$ is identified with the inclusion map $\mathcal{V}\subset\Gamma(TM)$. The vector bundle $A_\mathcal{V}$ is then a Lie algebroid with anchor map $\rho$.
\begin{definition} A \textit{Lie structure at infinity} on a smooth manifold $M_0$ is a pair $(M,\mathcal{V})$, where
\begin{enumerate}
\item $M$ is a compact manifold, possibly with corners, and $M_0$ is the interior of M;
\item $\mathcal{V}$ is a structural Lie algebra of vector fields on $M$;
\item $\rho:A_{\mathcal{V}}\to TM$ induces an isomorphism on $M_0$, that is, $\rho|_{M_0}: A|_{M_0}\to TM_0$ is an isomorphism of vector bundles.
\end{enumerate}
\end{definition}
\begin{definition} A \textit{Riemannian manifold with a Lie structure at infinity} is a manifold with a Lie structure at infinity $(M,\mathcal{V})$ endowed with a bundle metric $g$ on $A=A_\mathcal{V}$. In particular, $g$ defines a Riemannian metric on $M_0$ via the anchor map.
\end{definition}
A Riemannian manifold with a Lie structure at infinity has infinite volume (\cite[Proposition 4.1]{aln}), bounded curvature (\cite[Corollary 4.3]{aln}) and is complete (\cite[Corollary 4.9]{aln}). Sufficient conditions for the positivity of the injectivity radius are given in \cite[Theorem 4.14]{aln} and \cite[Theorem 4.17]{aln}.
\section{Injectivity radius of a manifold with Lie structure at infinity}
The following theorem is due to Debord (\cite[Theorem 2]{de}, see also \cite[Corollary 5.9]{cf}).
\begin{theorem}[Debord]
\label{dcf}
Every almost injective Lie algebroid over a smooth manifold is integrable.
\end{theorem}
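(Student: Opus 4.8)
The plan is to deduce this from the Crainic--Fernandes obstruction theory for integrability, which is the framework underlying the cited \cite[Corollary 5.9]{cf}. Recall that to any Lie algebroid $A\to M$ one associates the \emph{Weinstein groupoid} $\mathcal{G}(A)=P(A)/\!\sim$, the space of $A$-paths modulo $A$-homotopy: this is always a topological groupoid integrating $A$ in a weak sense, and it is a genuine Lie groupoid (so that $A$ is integrable) precisely when a family of \emph{monodromy groups} $\mathcal{N}_x(A)\subseteq Z(\mathfrak{g}_x)$ is locally uniformly discrete. Here $\mathfrak{g}_x=\ker\rho_x$ is the isotropy Lie algebra at $x$, $Z(\mathfrak{g}_x)$ its center, and $\mathcal{N}_x(A)$ is the image of a monodromy homomorphism $\partial\colon\pi_2(L_x)\to Z(\mathfrak{g}_x)$ defined on the second homotopy group of the leaf $L_x$ through $x$. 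The strategy is therefore to show that almost injectivity forces all the monodromy groups to vanish, whence the discreteness condition holds trivially and integrability follows.

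First I would unwind the hypothesis. \emph{Almost injective} means the anchor $\rho\colon A\to TM$ is injective on a dense open subset $U\subseteq M$. Since $A\to M$ is a vector bundle, a section of $A$ vanishing on $U$ vanishes identically, so the induced map on sections $\rho_\Gamma\colon\Gamma(A)\to\Gamma(TM)$ is injective; thus $A$ is realized as a finitely generated projective, involutive module $\mathcal{F}=\rho_\Gamma(\Gamma(A))\subseteq\Gamma(TM)$ of vector fields, i.e.\ a singular foliation whose bracket is inherited from $\Gamma(TM)$. For $x\in U$ the isotropy vanishes, $\mathfrak{g}_x=0$, so $\mathcal{N}_x(A)=0$ automatically. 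The entire content of the theorem is thus concentrated on the nowhere-dense singular locus $M\setminus U$, where the anchor drops rank and $\mathfrak{g}_x$ may be nonzero.

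The main obstacle is precisely to show that $\mathcal{N}_x(A)=0$ at these singular points. Concretely, given a class $[\sigma]\in\pi_2(L_x)$, one represents it by a smooth sphere in the leaf, lifts it through the $A$-path/$A$-homotopy construction, and reads off the monodromy as an isotropy-valued defect lying in $Z(\mathfrak{g}_x)$. Because $\rho_\Gamma$ is injective, an $A$-path is determined by its base path and an $A$-homotopy by the underlying homotopy of base maps in $M$; the defect defining $\partial[\sigma]$ is computed from a variation of such base homotopies, and injectivity of $\rho_\Gamma$ forces this variation to close up without residue, so that $\partial[\sigma]=0$. Establishing this vanishing rigorously --- controlling the $A$-homotopies uniformly as $x$ approaches the singular locus and confirming that no monodromy accumulates there --- is the technical heart of the argument. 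With the monodromy groups trivial, the Crainic--Fernandes criterion is satisfied and $A$ is integrable; by Remark~\ref{cf} there is then a unique $d$-simply connected integration. Equivalently, one may follow Debord's original route and build this integration directly as the holonomy groupoid of the singular foliation $\mathcal{F}$ via bisubmersions, the same singular points being exactly where one must verify that the holonomy groupoid carries a smooth structure of the expected dimension.
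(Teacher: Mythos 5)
The paper does not actually prove this statement: it is quoted verbatim as Debord's theorem, with pointers to \cite{de} and to \cite[Corollary 5.9]{cf}, and is then used as a black box in the proof of Theorem \ref{corners}. So there is no internal proof to compare against, and your proposal has to stand on its own as a reconstruction of the cited results.

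You correctly identify the right framework --- the Crainic--Fernandes monodromy obstructions are indeed what underlies \cite[Corollary 5.9]{cf}, and Debord's original construction of the holonomy groupoid of the induced singular foliation is the alternative route you mention at the end --- but the argument has a genuine gap exactly at the step you yourself flag as the ``technical heart,'' and the heuristic you offer for closing it rests on a false premise. You claim that injectivity of $\rho_\Gamma\colon\Gamma(A)\to\Gamma(TM)$ implies that ``an $A$-path is determined by its base path and an $A$-homotopy by the underlying homotopy of base maps.'' This does not follow: injectivity of $\rho$ over a dense open set $U$ (equivalently, injectivity of $\rho_\Gamma$ on sections) says nothing about fiberwise injectivity of $\rho_x$ at points $x\in M\setminus U$, and it is precisely there that $\mathfrak{g}_x=\ker\rho_x$ is nonzero. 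An $A$-path $a(t)$ over a base path $\gamma(t)$ meeting the singular locus satisfies only $\rho(a(t))=\dot\gamma(t)$, so it carries strictly more information than $\gamma$; for instance, any path in $\mathfrak{g}_x$ is an $A$-path over the constant base path at $x$. Consequently the assertion that the monodromy defect ``closes up without residue'' is unsupported, and proving $\mathcal{N}_x(A)=0$ (or at least discreteness together with $\liminf_{y\to x} r(y)>0$) at singular leaves --- which can be positive-dimensional and have nontrivial $\pi_2$ --- is the entire content of the theorem and is not addressed. To complete the proof you would need either the reduction arguments of \cite{cf} that actually control the monodromy of almost injective algebroids, or Debord's direct construction of a smooth structure on the holonomy groupoid; as written, the proposal restates the difficulty rather than resolving it.
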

This has the following implication for Lie structures at infinity.
\begin{theorem}
\label{corners}
Any Lie algebroid over a manifold with corners associated with a Lie structure at infinity is integrable.
\end{theorem}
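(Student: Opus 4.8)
The plan is to reduce the statement, by an embedding into a smooth manifold, to Debord's Theorem~\ref{dcf}, which applies only to algebroids over a manifold without corners. The first input is that $A=A_{\mathcal V}$ is almost injective. By the third axiom of a Lie structure at infinity, the anchor $\rho\colon A\to TM$ restricts to an isomorphism $\rho|_{M_0}\colon A|_{M_0}\to TM_0$ over the interior $M_0$. Since $M_0$ is open and dense in $M$ and injectivity of a vector bundle morphism is an open condition, $\rho$ is injective on a dense open subset of $M$, which is exactly almost injectivity.

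The obstacle is that $M$ has corners, so Theorem~\ref{dcf} does not apply directly. To remove the corners I would embed the compact manifold with corners $M$ as a codimension-zero submanifold with corners of a closed smooth manifold $\widetilde M$ of the same dimension, so that the faces of $M$ appear as traces of transversally intersecting hypersurfaces of $\widetilde M$; such an embedding always exists for a compact manifold with corners. I would then extend $A\to M$ to a vector bundle $\widetilde A\to\widetilde M$ and extend the anchor to a bundle map $\widetilde\rho\colon\widetilde A\to T\widetilde M$, arranging $\widetilde\rho$ to be an isomorphism over $\widetilde M\setminus\partial M$. As $\partial M$ is nowhere dense, $\widetilde A$ is then almost injective. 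On the dense open set $\widetilde M\setminus\partial M$ where $\widetilde\rho$ is an isomorphism the bracket is forced, $[X,Y]=\widetilde\rho^{-1}[\widetilde\rho X,\widetilde\rho Y]$, and it coincides with the given bracket of $A$ over $M_0$; one then extends it across $\partial M$ to obtain a Lie algebroid structure on $\widetilde A$ restricting to $A$ over $M$.

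With $\widetilde A$ an almost injective Lie algebroid over the \emph{smooth} manifold $\widetilde M$, Debord's Theorem~\ref{dcf} furnishes a Lie groupoid $\widetilde G\rightrightarrows\widetilde M$ integrating it. To conclude I would restrict to $M$. Here the third axiom of a structural Lie algebra---tangency of $\mathcal V$ to all faces---is essential: it forces $M$, and each of its faces, to be a union of orbits of $\widetilde G$, i.e.\ a saturated subset, so that $d^{-1}(M)=r^{-1}(M)$ and $G:=\widetilde G|_M$ is a Lie subgroupoid over $M$ whose associated Lie algebroid is $\widetilde A|_M\cong A$. Thus $A$ is integrable.

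The main obstacle I anticipate is the extension step: one must verify that the bracket, determined over the dense injective locus by $\widetilde\rho$, extends smoothly and satisfies the Jacobi and Leibniz identities across $\partial M$, so that $\widetilde A$ is genuinely a Lie algebroid and not merely one over $\widetilde M\setminus\partial M$. Almost injectivity is what makes this tractable, since on the injective locus the bracket is rigidly determined by the anchor and already matches the smooth bracket carried by $A$ over all of $M$; the remaining point is the compatibility of this data with the isomorphic extension $\widetilde\rho$ along $\partial M$. A secondary verification is that the restriction $\widetilde G|_M$ is a smooth groupoid with $d$ a submersion onto $M$, which follows from saturatedness together with $d$ being a submersion on $\widetilde G$.
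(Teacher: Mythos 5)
Your reduction to Debord's theorem is the right idea, and the first step (almost injectivity of $A_{\mathcal V}$, since $\rho$ is an isomorphism over the dense open interior $M_0$) is correct. But the proof as written has a genuine gap at exactly the place you flag: the construction of the extended Lie algebroid $\widetilde A\to\widetilde M$. Writing ``one then extends it across $\partial M$'' is an assertion, not an argument, and it is the entire difficulty. Concretely: choose a local frame $v_1,\dots,v_k$ of $\mathcal V$ near a boundary point, with structure functions $[v_i,v_j]=\sum_l c_{ij}^l v_l$, $c_{ij}^l\in\mathcal C^\infty(M)$. After extending the $v_i$ to vector fields $\tilde v_i$ on a neighbourhood in $\widetilde M$, the bracket on $\widetilde A$ over the exterior region $\widetilde M\setminus M$ is forced (where $\widetilde\rho$ is injective) to have structure functions $\tilde c_{ij}^l$ obtained by expressing $[\tilde v_i,\tilde v_j]$ in the frame $\tilde v_1,\dots,\tilde v_k$. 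Since the anchor degenerates along $\partial M$ (the $\tilde v_i$ become pointwise linearly dependent there, e.g.\ for $b$-, $0$- or scattering structures), there is no reason for an arbitrary extension to yield $\tilde c_{ij}^l$ that remain smooth, or even bounded, as one approaches $\partial M$ from outside $M$; equivalently, $[\tilde v_i,\tilde v_j]$ need not lie in the $\mathcal C^\infty$-span of the $\tilde v_l$ near $\partial M$ on the exterior side. One must exhibit a \emph{specific} extension for which this closure under brackets holds, and that is the real content of the corner-removal step. (Your restriction step at the end is comparatively harmless: tangency of the anchor image to the extended hypersurfaces does make $M$ saturated, and $d^{-1}(M)$ is a manifold with corners since $d$ is a submersion.)

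The paper closes precisely this gap by a different device: instead of embedding $M$ in a closed manifold and extending, it \emph{doubles} $M$ across its boundary hypersurfaces to get a closed manifold $\widetilde M$ with a finite group $\Gamma$ acting so that $\widetilde M/\Gamma\simeq M$, with quotient map locally $(x_1,\dots,x_k,y)\mapsto(x_1^2,\dots,x_k^2,y)$. The pull-back $\widetilde{\mathcal V}=\mathcal C^\infty(\widetilde M)\otimes_{\mathcal C^\infty(\widetilde M)_\Gamma}q^*\mathcal V$ is then automatically a locally free $\mathcal C^\infty(\widetilde M)$-module closed under Lie brackets, because vector fields tangent to the faces pull back to smooth vector fields under the branched cover and the structure functions pull back along $q$; no extension problem arises. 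The price is that one must descend from the integration $\widetilde G$ back to $M$ by lifting the $\Gamma$-action to $\widetilde G$ via Lie II and taking the quotient $\widetilde G/\Gamma$, rather than restricting to a saturated subset. If you want to salvage your embedding approach, the most plausible route is to define the extension by reflection in a collar of each hypersurface --- at which point you have essentially reconstructed the doubling argument.
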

\begin{proof}
This extension of Theorem \ref{dcf} to manifolds with corners is well-known to experts. However, since no explicit proof seems to be available in the literature, we will provide one for the convenience of the readers.\par
Let $(M,\mathcal{V})$ be a Lie structure at infinity of $M_0$ and $A=A_{\mathcal{V}}$ be the corresponding structural vector bundle. Taking two copies of $M$ and gluing them along a maximal subset of disjoint boundary hypersurfaces, we obtain a compact manifold with corners $M_1$ with at least one hypersurface less. Repeating this operation finitely many times, we obtain a closed manifold $\widetilde{M}$ with a finite group $\Gamma$ acting on $\widetilde{M}$ such that $\widetilde{M}/\Gamma\simeq M$ topologically. Now, by \cite[Exercise 1.6.2]{me}, $M$ is naturally an orbifold. Changing the smooth structure on $\widetilde{M}$, one can in fact ensure that the quotient map $q:\widetilde{M}\to M$ is such that $q^*(\mathcal{C}^\infty(M))=\mathcal{C}^\infty(\widetilde{M})_{\Gamma}=\{f\in\mathcal{C}^\infty(\widetilde{M}):\forall g\in\Gamma, f\circ g=f\}$. In a suitable local chart, $q$ can be written as $(x_1,\dots,x_k,x_{k+1},\dots,x_n)\mapsto(x_1^2,\dots,x_k^2,x_{k+1},\dots,x_n)$ where $k$ is the depth of the point $(0,0,\dots,0)$. Let $\widetilde{\mathcal{V}}=\mathcal{C}^\infty(\widetilde{M})\otimes_{\mathcal{C}^\infty(\widetilde{M})_{\Gamma}}q^*\mathcal{V}\subset\mathfrak{X}(T\widetilde{M})$ be the pull-back of the structural vector fields. For instance, if $\mathcal{V}_b(M)$ is the space of vector fields tangent to the faces of $M$, then $\widetilde{\mathcal{V}}_b(M)$ is the space of vector fields on $\widetilde{M}$ which are tangent to $q^{-1}(\partial M)$ (the union of some closed submanifolds of $\widetilde{M}$).\par
Now, $\widetilde{\mathcal{V}}$ is a finitely generated projective $\mathcal{C}^\infty(\widetilde{M})$-module. To see this, it suffices to show that $\widetilde{\mathcal{V}}$ is locally free of rank $k$ for some $k$. Given $p\in\widetilde{M}$, then since $\mathcal{V}$ is locally free of rank $k$ for some $k$, there exist $v_1,\dots,v_k\in\mathcal{V}$ which locally and freely span $\mathcal{V}$ near $q(p)$. This means $\widetilde{\mathcal{V}}$ is locally and freely spanned by $q^*v_1,\dots,q^*v_n\in\widetilde{\mathcal{V}}$ near $p$, showing that $\widetilde{\mathcal{V}}$ is locally free of rank $k$ as claimed.\par
By the Serre-Swan theorem, we have a vector bundle $A_{\widetilde{V}}$ over $\widetilde{M}$ with the space of smooth sections $\mathcal{C}^\infty(\widetilde{M},A_{\widetilde{V}})=\widetilde{\mathcal{V}}$. Clearly the inclusions $\widetilde{\mathcal{V}}\subset\widetilde{\mathcal{V}_b}\subset\mathcal{C}^\infty(\widetilde{M},T\widetilde{M})$ induce an anchor map, so that $A_{\widetilde{V}}$ is naturally an almost injective Lie algebroid. By the Theorem \ref{dcf} and the Remark \ref{cf}, there exists therefore a $d$-simply connected groupoid $\widetilde{G}$ integrating $A_{\widetilde{V}}$. Each element $g\in\Gamma$ induces an automorphism $\rho(g):A_{\widetilde{V}}\to A_{\widetilde{V}}$, and by Lie II, an automorphism on $\widetilde{G}$. Hence we have an action of the group $\Gamma$ over $\widetilde{G}$. The quotient $\widetilde{G}/\Gamma$ is then the desired $d$-simply connected Lie groupoid integrating $(M,\mathcal{V})$.
\end{proof}
Let $M_0$ be a connected smooth manifold with a Lie structure at infinity $(M,\mathcal{V})$. By Theorem \ref{corners}, there exists a $d$-simply connected groupoid $G=(M,G^1,d,r,\mu,u,\iota)$ with units $M$ such that $A(G)\simeq A$ as Lie algebroids over $M$. Therefore $A(G)$ is equipped with an inner product also noted $g$. The anchor map is given by $r_*:A(G)\to TM$.\par
We have an isomorphism $r^*A(G)\simeq T_{\mathrm{vert}}G$ where $r^*A(G)$ is the pull-back of $A(G)$ via the range map $r:G\to M$ (\cite[(19)]{aln2}). Explicitly, for $p\in G$, $(r^*A(G))_p=A(G)_{r(p)}=T_{r(p)}G_{r(p)}$. The vector bundle $r^*A(G)$ is equipped with a metric induced by the metric $g$ on $A(G)$, hence so is $T_{\mathrm{vert}}G$. Therefore each $G_x$ becomes a Riemannian manifold for all $x\in M$.\par 
Let $G_x^x=\{g\in G_x:r(g)=x\}$. For $x\in M_0$, $G_x^x$ is a discrete group since $T_xG_x^x$ is of dimension $0$ (being the kernel of the map $r_*:A(G)_x\to T_xM_0$).\par
\begin{lemma}(\cite[page 733]{aln2})
\label{cov}
If $A\to TM$ is the Lie algebroid associated with a Lie structure at infinity and $G$ is the corresponding $d$-simply connected Lie groupoid, then for all $x\in M_0, r:G_x\to M_0$ is a covering map with group $G_x^x$.
\end{lemma}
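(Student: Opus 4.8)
The plan is to exhibit $r\colon G_x\to M_0$ as a surjective local isometry whose fibres are the orbits of a free, properly discontinuous isometric action of $G_x^x$, and then to invoke the standard fact that such a map is a (Galois) covering with deck group $G_x^x$. The three ingredients I would assemble are: the anchor is an isometric isomorphism over $M_0$, the fibre metric is right-invariant, and $M_0$ is complete while $M$ is compact.

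First I would check that $r$ is a local isometry. Fix $g\in G_x$ and recall the isomorphism $r^*A(G)\simeq T_{\mathrm{vert}}G$, whose fibre over $g$ is the differential of right translation $(R_g)_*\colon A_{r(g)}=T_{\mathrm{id}_{r(g)}}G_{r(g)}\to T_gG_x$; by construction this is precisely the isometry defining the metric on $T_gG_x$. Since $r(hg)=r(h)$ we have $r\circ R_g=r$ on $G_{r(g)}$, and differentiating at $\mathrm{id}_{r(g)}$ (which $R_g$ sends to $g$) gives $r_*|_{T_gG_x}\circ (R_g)_*=\rho_{r(g)}$, the anchor. As $\rho$ restricts to an isometric isomorphism $A|_{M_0}\to TM_0$ (this is how the metric on $M_0$ is defined, together with the defining property of a Lie structure at infinity), it follows that $r_*|_{T_gG_x}$ is an isometric isomorphism for every $g$ with $r(g)\in M_0$.

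Next I would show $r(G_x)=M_0$, so that the previous step applies on all of $G_x$. The image is open because $r$ is a local diffeomorphism wherever the anchor is invertible. To see the image stays inside $M_0$ and is closed, I would use completeness of $M_0$ (\cite[Corollary 4.9]{aln}): a path in $G_x$ issuing from $\mathrm{id}_x$ and reaching a point $g$ with $r(g)\in\partial M$ projects, up to the first such time, to a finite-length path in $M_0$ converging to a boundary point, which is impossible since $\partial M$ lies at infinite distance in the complete metric of $M_0$. Hence $r(G_x)\subseteq M_0$, and being open, closed and nonempty in the connected manifold $M_0$, it equals $M_0$.

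Finally, the covering property and the identification of the group. The fibre metric is right-invariant, so each $k\in G_x^x$ acts on $G_x$ by the isometry $R_k$; this action is free, and its orbits are exactly the fibres of $r$ (if $r(h_1)=r(h_2)$ then $k:=h_1^{-1}h_2\in G_x^x$ and $h_2=h_1k$). Because $r$ is injective on small balls, distinct points of a fibre are separated by a definite amount, and the isometries $R_k$ transport a single normal ball around one fibre point to equal-radius normal balls around all the others; this produces an evenly covered neighbourhood of each $y\in M_0$, provided every short geodesic of $M_0$ based at $y$ lifts through each preimage. The main obstacle is precisely this lifting, i.e.\ geodesic completeness of $G_x$: one must rule out that a finite-length curve escapes to infinity. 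I expect to resolve it using compactness of $M$, since the relevant lifts are integral curves of (time-dependent) right-invariant vector fields on $G$, whose flows are complete because the anchored flow on the compact base $M$ is complete. Granting this, $r$ realizes $M_0$ as the quotient of $G_x$ by the free, properly discontinuous isometric $G_x^x$-action, hence is a covering map with group $G_x^x$.
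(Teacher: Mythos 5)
Your proposal is essentially correct but runs along a genuinely different track from the paper's. The paper's argument is ``soft'': it quotes \cite[Proposition 1.1]{cf} to see that $r(G_x)$ is the orbit (leaf) of the singular foliation through $x$, hence contained in $M_0$; this makes $G|_{M_0}$ a $d$-simply connected integration of $TM_0$, which by uniqueness (Lie I) must be the homotopy groupoid $(\widetilde{M_0}\times\widetilde{M_0})/\pi_1(M_0)$ --- and for that groupoid the statement ($r:G_x\to M_0$ is the universal cover, with group $G_x^x\simeq\pi_1(M_0)$) holds by inspection. You instead argue metrically: $r$ is a local isometry by right-invariance of the fibre metric, the image is all of $M_0$ by an open--closed argument using completeness, and the covering property follows from the free isometric $G_x^x$-action once short paths of $M_0$ lift to $G_x$. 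Your identity $r_*|_{T_gG_x}\circ(R_g)_*=\rho_{r(g)}$ and your computation of the fibres are correct and agree with the paper. What your route buys is independence from the structure theory of \cite{cf} beyond Theorem \ref{dcf} itself, and it makes explicit the even-covering step that the paper passes over in one sentence; what it costs is that everything now hinges on the lifting/completeness claim you flag at the end. That claim is a real obligation --- a surjective local isometry need not be a covering without it --- but your proposed resolution is the standard and correct one: for $X\in\Gamma(A)$ the right-invariant vector field $\overrightarrow{X}$ on $G^1$ satisfies $\phi^{\overrightarrow{X}}_t(g)=\phi^{\overrightarrow{X}}_t(\mathrm{id}_{r(g)})\,g$, so its completeness reduces to a uniform positive escape time on units, which compactness of $M$ provides; lifting $\dot c(t)=\rho(a(t))$ through such (time-dependent) fields then yields the path-lifting property, and as a byproduct the closedness of $r(G_x)$ in $M_0$, which your open--closed argument otherwise leaves unjustified. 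So the proof is sound once that one lemma is written out; the paper's Lie-theoretic shortcut simply avoids it.
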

\begin{proof}
By \cite[Proposition 1.1]{cf}, for all $x\in M_0$, $r(G_x)\subset M_0$ (which is the leaf of the singular foliation of $A$ passing by $x$). On the other hand, $G|_{M_0}$ is the unique $d$-simply connected Lie groupoid which integrates $TM_0$, and therefore it isomorphic to the homotopy groupoid $(\widetilde{M_0}\times\widetilde{M_0})/{\pi_1(M_0)}$. Consequently, $M_0=r(G_x)$ for all $x\in M_0$.\par
Now, by definition of a Lie structure at infinity, $r_*:T_yG_x\to T_{r(y)}M_0$ is an isomorphism. This means that $r:G_x\to M_0$ is a local diffeomorphism. Moreover, $g_1,g_2\in G_x$ with $r(g_1)=r(g_2)$ if and only if there exists $h=g_1^{-1}g_2\in G_x^x$ such that $g_2=g_1h$. That is, $r:G_x\to M_0$ is a covering map with group $G_x^x$.
\end{proof}
\begin{theorem}
Let $M_0$ be a connected smooth manifold with a Lie structure at infinity $(M,\mathcal{V})$. Then for any Riemannian metric $g$ on $A$, the injectivity radius of $(M_0,g)$ is positive.
\end{theorem}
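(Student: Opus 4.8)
The plan is to exploit the covering map $r\colon G_x\to M_0$ from Lemma~\ref{cov} and reduce positivity of the injectivity radius of $(M_0,g)$ to two uniform estimates on the $d$-fibres, one routine and one hard. First I would record that the metric on $T_{\mathrm{vert}}G$ makes each $r\colon G_x\to M_0$ a \emph{Riemannian covering}: under the identification $T_pG_x\cong A(G)_{r(p)}$ the differential $dr_p$ is the anchor $\rho=r_*$, which on $M_0$ is a fibrewise isometry by construction of $g$, so $r$ is a local isometry. Since $(M_0,g)$ is complete by \cite[Corollary 4.9]{aln} and a Riemannian covering of a complete manifold is complete, each $G_x$ ($x\in M_0$) is a complete Riemannian manifold. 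By Lemma~\ref{cov} its deck group is $G_x^x$, and because $G_x$ is $d$-simply connected, $r\colon G_x\to M_0$ is in fact the universal cover, with $G_x^x\cong\pi_1(M_0)$ acting by isometries.

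For a Riemannian covering one has the standard lower bound
\begin{equation*}
\mathrm{inj}_{M_0}(x)\;\geq\;\min\Big(\mathrm{inj}_{G_x}(u(x)),\ \tfrac12\,\mathrm{sys}(x)\Big),\qquad \mathrm{sys}(x):=\inf\{\,d_{G_x}(u(x),h):h\in G_x^x\setminus\{u(x)\}\,\},
\end{equation*}
which follows by lifting: $\exp^{M_0}_x=r\circ\exp^{G_x}_{u(x)}\circ(dr)^{-1}$, the fibre exponential is a diffeomorphism up to radius $\mathrm{inj}_{G_x}(u(x))$, and $r$ is injective on any ball of radius $\tfrac12\mathrm{sys}(x)$ since a deck isometry carrying two points of such a ball onto one another would displace $u(x)$ by less than $\mathrm{sys}(x)$. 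The first term is handled by a \emph{uniform} exponential estimate: the vertical metric is smooth on $G^1$ near the unit section $u(M)$, and $u(M)$ is compact because $M=G^0$ is compact, so $\exp^{G_x}_{u(x)}$ is defined on a ball of some radius $\epsilon_0>0$ independent of $x$ and, having derivative the identity at the origin, is a diffeomorphism on a uniform ball $B(0,\epsilon_1)$; hence $\mathrm{inj}_{G_x}(u(x))\geq\epsilon_1$ for all $x$. (The conjugate-radius part of this is in any case guaranteed by the bounded curvature of $M_0$, \cite[Corollary 4.3]{aln}.)

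Everything therefore reduces to a uniform positive lower bound for $\mathrm{sys}(x)$ over $x\in M_0$, equivalently to ruling out arbitrarily short noncontractible geodesic loops; this is the step I expect to be the main obstacle. I would argue by contradiction: given $x_n\in M_0$ and $h_n\in G_{x_n}^{x_n}\setminus\{u(x_n)\}$ with $d_{G_{x_n}}(u(x_n),h_n)\to0$, the minimizing fibre geodesics have ambient length $\to0$ for any background metric on $G^1$ extending the vertical one, so after passing to a subsequence $x_n\to x_\infty\in M$ and $h_n,u(x_n)\to u(x_\infty)$. If $x_\infty\in M_0$, then $(d,r)\colon G^1\to M\times M$ is a local diffeomorphism near $u(x_\infty)$ (its differential is invertible exactly because $\rho$ is an isomorphism at interior points), and since $(d,r)(h_n)=(x_n,x_n)=(d,r)(u(x_n))$ this forces $h_n=u(x_n)$ for large $n$, a contradiction.

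The delicate case is $x_\infty\in\partial M$, where $\rho$ drops rank, the isotropy $G_{x_\infty}^{x_\infty}$ is a positive-dimensional Lie group, and $(d,r)$ is no longer a local diffeomorphism, so the $h_n$ could a priori approach the unit. Here I would use that the structural bundle $A$, its metric, and hence the family $(G_x,u(x))$ vary smoothly up to the boundary (because $M$ is a compact manifold with corners and $A$ extends smoothly over $\partial M$), so that $(G_{x_n},u(x_n))$ converges in the pointed $C^\infty$ topology to $(G_{x_\infty},u(x_\infty))$. Over the boundary, $r\colon G_{x_\infty}\to M$ collapses only the isotropy directions and factors through the covering of the compact leaf through $x_\infty$; the interior isotropy elements $h_n$, being of $\pi_1$ type, must then limit to \emph{non-unit} holonomy of that leaf rather than to $u(x_\infty)$, with displacement bounded below by the systole of the compact boundary face. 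As $M$ has only finitely many faces and the interior contribution is uniform by the case above together with compactness, this yields $\mathrm{sys}(x)\geq\delta>0$, whence $\mathrm{inj}_{M_0}(x)\geq\min(\epsilon_1,\delta/2)>0$. The crux I would need to make fully rigorous is precisely this smooth-convergence argument near $\partial M$ showing that short interior loops cannot concentrate at the boundary — which is where the hypothesis of a Lie structure at infinity (a smooth frame tangent to the faces) does the work and rules out cusp-like degenerations of the injectivity radius.
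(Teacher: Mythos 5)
Your reduction to the Riemannian covering $r\colon G_x\to M_0$ and the split $\mathrm{inj}_{M_0}(x)\geq\min\left(\mathrm{inj}_{G_x}(u(x)),\tfrac12\mathrm{sys}(x)\right)$ is the right framework and is close to how the paper proceeds, but both halves of your argument have genuine gaps, and the first is fatal as written. The claim that $\exp^{G_x}_{u(x)}$ is a diffeomorphism on a uniform ball ``because its derivative is the identity at the origin'' and the metric is smooth near the compact section $u(M)$ is not a valid inference: control of $d\exp$ (equivalently, absence of conjugate points, which indeed follows from bounded curvature) only makes the exponential a local diffeomorphism; it does not prevent a geodesic of length $\varepsilon$ from returning to $u(x)$ --- a flat cylinder of small girth has exponential map with derivative the identity everywhere yet injective on no uniform ball. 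Since $d\colon G^1\to M$ is merely a submersion rather than a proper fibration, the fibres change topology over $\partial M$, and compactness of $u(M)$ alone does not exclude such collapse. Ruling out short geodesic loops in $G_x$ based at $u(x)$, uniformly in $x$, is precisely the heart of the theorem, and it is what the paper's main argument supplies: the endpoints of a lifted loop give points $P_i,Q_i$ of the compact set $S(A)\subset S(T_{\mathrm{vert}}G)$ with $Q_i=\Psi_{a_i}(P_i)$ for the fibrewise geodesic flow $\Psi$; in the limit the difference quotient $(Q_i-P_i)/a_i$ must be simultaneously tangent to the fibre $S(A)_p$ and equal to $\dot\Psi(P)$, which has nonzero component along $G_p$ --- a contradiction. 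Some such global compactness/transversality input is indispensable and is absent from your sketch.

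The second gap is the one you flag yourself: the boundary case of the systole bound. Your proposed mechanism (pointed smooth convergence of $(G_{x_n},u(x_n))$ to $(G_{x_\infty},u(x_\infty))$ and a lower bound by ``the systole of the compact boundary leaf'') is not secure: boundary leaves need not be compact, and isotropy elements can perfectly well accumulate on the positive-dimensional identity component of $G_{x_\infty}^{x_\infty}$. The paper sidesteps the systole entirely. It shows, by bounding $g$ from below first by a local $b$-metric and then by a Euclidean metric on a coordinate chart $U$ around the limit point $p\in\partial M$, that all sufficiently short geodesic loops are contained in the contractible set $U\cap M_0$; hence they are null-homotopic and lift to \emph{closed} loops in $G_x$, so only the first half of your dichotomy ever occurs. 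To salvage your outline, replace the uniform-exponential one-liner by the $S(A)$-transversality argument and the boundary systole discussion by the contractible-chart lemma.
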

\begin{proof}
We prove the theorem by contradiction. Suppose that the injectivity radius of $(M_0,g)$ is zero, then, as the curvature is bounded, there is a sequence of geodesic loops $c_i:[0,a_i]\to M_0$, parametrized by arc-length, with $a_i\to 0$. By compactness of $M$, we can suppose that $c_i(0)$ converges to a point $p\in M$. We have $p\in\partial M$ since the injectivity radius is positive in any compact region of $M_0$.\par
Let $U$ be a local chart of $M$ containing $p$ such that $U$ is contractible. 
\begin{lemma}
There exists a number $N>0$ such that $\forall n>N$, the loop $c_n$ is contained in $U$.
\end{lemma}
\begin{proof}
Let $(x_1,\dots,x_k,y_1,\dots,y_l)$ be a set of local coordinates centered at the point $p$ with $x_i\geq 0$ for all $i$ and $p=(0,\dots,0)$. Let $g_b=
\sum_{i=1}^k\frac{dx_i^2}{x_i^2}+\sum_{i=1}^ldy_i^2$ be a local $b$-metric and $g_0=\sum_{i=1}^kdx_i^2+\sum_{i=1}^ldy_i^2$ be a local metric with boundary. Since the structural vector fields are tangential vector fields ($\mathcal{V}\subset\mathcal{V}_b$), taking $U$ smaller if needed, there exist constants $C,K>0$ such that $g\geq Cg_b\geq CKg_0$ in $U\cap M_0$. Let $l^t(c_i),l^t_b(c_i),l^t_0(c_i)$ denote the lengths of the segment $[c_i(0),c_i(t)]$ (of the geodesic loop $c_i$) with respect to the metric $g$, the local $b$-metric $g_b$ and the local metric with boundary $g_0$ respectively (suppose that the segment is contained in $U$). Let $\varepsilon>0$ be such that $B_0(p,\varepsilon)=\{x\in\mathbb{R}_+^k\times\mathbb{R}^l:d_0(x,p)<\varepsilon\}\subset U$ (where $d_0$ is the distance with respect to the metric $g_0$, well-defined on $B_0(p,\varepsilon)$). Since $a_i\to 0$, there exists $N_1$ such that $a_i<\min(\frac{\varepsilon}{4},CK\frac{\varepsilon}{4})$ for all $i>N_1$. Since $c_i(0)\to p$, there exists $N_2$ such that $d_0(p,c_i(0))<\frac{\varepsilon}{4}$ for all $i>N_2$. Let $N=\max(N_1,N_2)$.\par
Now let $n$ be any number greater than $N$. Suppose that the loop $c_n$ is not contained in $U$. Then it is not contained in $B_0(p,\frac{\varepsilon}{2})$. Thus there exists $t\in[0,a_n]$ minimal such that $d_0(c_n(t),p)=\frac{\varepsilon}{2}$. Then we have $d_0(c_n(0),c_n(t))\geq\vert d_0(c_n(t),p)-d_0(c_n(0),p)\vert\geq\frac{\varepsilon}{4}$, which implies $a_i=l(c_i)\geq l^t(c_i)\geq CKl^t_0(c_i)\geq CKd_0(c_n(0),c_n(t))\geq CK\frac{\varepsilon}{4}$, which is a contradiction. Therefore the loop $c_n$ is contained in $U$.\par
The lemma is proven.
\end{proof}
Hence, without loss of generality, we can suppose that the loops are contained in $U$.\par
Denote by $G=(M,G^1,d,r,\mu,u,\iota)$ the $d$-simply connected groupoid integrating $A_\mathcal{V}\to TM$. Since $U$ is contractible, the fundamental class of each loop $c_i$ is trivial, therefore by Lemma \ref{cov} we can lift $c_i$ to a geodesic loop $\widetilde{c}_i$ in $G_{c_i(0)}$ (i.e. $\widetilde{c}_i:[0,a_i]\to r^{-1}(U)\cap G_{c_i(0)}$) such that the base points are $\widetilde{c}_i(0)=\widetilde{c}_i(a_i)=c_i(0)=c_i(a_i)$.\par
Let $S(T_{\mathrm{vert}}G)=\{x\in T_{\mathrm{vert}}G:\Vert x\Vert=1\}$. We have a natural projection $\pi:S(T_{\mathrm{vert}}G)\to G^1$. On $S(T_{\mathrm{vert}}G)$ we have a flow $\Psi$ which, over each $d$-fiber $G_x$ of $d:G^1\to G^0$, corresponds to the geodesic flow of $G_x$. The geodesic loops on $G_x$ correspond to segments $[P_i,Q_i]$ of the flow $\Psi$ on $S(TG_x)$ (with $Q_i=\Psi_{a_i}(P_i)$). We have two sequences $P_i=(\widetilde{c}_i(0),\dot{\widetilde{c}}_i(0))$ and $Q_i=(\widetilde{c}_i(a_i),\dot{\widetilde{c}}_i(a_i))$ in $S(A)\subset S(T_{\mathrm{vert}}G)$. By compactness of $S(A)$ and $M$, there exists a subsequence such that $P_i\to P\in S(TG_p)$ and $Q_i\to Q\in S(TG_p)$.\par
Since $a_i\to 0$, we have $P=Q$.  In a local chart, we can write $(\frac{Q_i-P_i}{a_i},c_i(0))\to (w,p)$. Since $a_i\to 0$, $w=\dot{\Psi}(P)$. Since $P_i,Q_i\in (S(A))_{c_i(0)}$ for all $i$, $w$ is tangent to the fiber $S(A)_p=S(TG_p)$, which is a contradiction (for $\Psi$ is the geodesic flow over $G_p$).\par
\end{proof}
\begin{remark}
In \cite{aln}, a flow $\Phi$ is defined on $S(A)$ extending the geodesic flow on $S(TM_0)$. However, $\Phi$ itself is not quite a geodesic flow since typically it has fixed points at the boundary. Our approach does not seem to work with this flow. Indeed, to each geodesic loop $c_i:[0;a_i]\to M_0$, we have a corresponding segment $\Phi_i: [0; a_i] \to S(A)$. By considering a convergent subsequence, the limit of $(c_i(0),\dot{c}_i(0))$ is a point $v$ contained in $\partial S(A)=S(A)|_{\partial M}$. The limit of $c_i(0)$ is a point $p = \pi(v)$ in $\partial M$. In the notations of \cite{aln}, we have $(\pi^\# r_*)(H_v(v)) = 0$ and $r_*(v) = 0$. In particular, the flow $\Phi$ at $v$ is stationary: $\forall t, \Phi_t(v) = v$. This, however, is not sufficient to obtain a contradiction, since at the boundary, $\Phi$ may have some fixed points as mentioned above.\par
\end{remark}
\nocite{*}
\bibliographystyle{plain}
\bibliography{bib}
\end{document}